\newtheorem{thm}{Theorem}[section] 
\newtheorem{cor}[thm]{Corollary}
\newtheorem{lem}[thm]{Lemma}
\newtheorem{prop}[thm]{Proposition}
\newtheorem{ques}[thm]{Question}
\theoremstyle{definition}
\newtheorem{rem}[thm]{Remark}
\newtheorem{exmpl}[thm]{Example}
\newcommand\operA[2]{{\if!#2!\operatorname{#1}\else{\operatorname{#1}_{#2}^{\phantom{I}}}\fi}} 
\def\tr{{\operatorname{Tr}}}
\def\dim{{\operatorname{dim}}}
\newcommand{\Trace}[1][]{\if!#1!\operatorname{Tr}\else{\operatorname{Tr}_{#1}^{\phantom{I}}}\fi} 
\long\def\forget#1\forgotten{{}} %
\def\({\left(}
\def\){\right)}
\newcommand\LAY[3][]{{\begin{array}{c}\mbox{#2} \if#1!{}\else{+}\fi \\ \mbox{#3}\end{array}}}
\def\ps@pprintTitle{%
 \let\@oddhead\@empty
 \let\@evenhead\@empty
 \def\@oddfoot{}%
 \let\@evenfoot\@oddfoot}
\newcommand{\bigperp}{%
  \mathop{\mathpalette\bigp@rp\relax}%
  \displaylimits
}
\newcommand{\bigp@rp}[2]{%
  \vcenter{
    \m@th\hbox{\scalebox{\ifx#1\displaystyle2.1\else1.5\fi}{$#1\perp$}}
  }%
}
\renewcommand{\geq}{\geqslant}
\renewcommand{\leq}{\leqslant}
\DeclareMathOperator{\abd}{ABrd}
\newif\iffurther
\journal{??}
\begin{document}
\begin{frontmatter}

\title{Asymptotic Brauer $p$-Dimension}

\author{Adam Chapman}
\address{School of Computer Science, Academic College of Tel-Aviv-Yaffo, Rabenu Yeruham St., P.O.B 8401 Yaffo, 6818211, Israel}
\ead{adam1chapman@yahoo.com}

\author{Kelly McKinnie}
\address{Department of Mathematical Sciences, University of Montana, Missoula, MT 59812, USA}
\ead{kelly.mckinnie@mso.umt.edu}

\begin{abstract}
We define and compute $\abd_p(F)$, the asymptotic Brauer $p$-dimension of a field $F$, in cases where $F$ is a rational function field or Laurent series field. $\abd_p(F)$ is defined like the Brauer $p$-dimension except it considers finite sets of Brauer classes instead of single classes. Our main result shows that for fields $F_0(\alpha_1,\dots,\alpha_n)$ and $F_0 (\!( \alpha_1)\!) \dots(\!(\alpha_n)\!)$ where $F_0$ is a perfect field of characteristic $p>0$ when $n \geq 2$ the asymptotic Brauer $p$-dimension is $n$.
We also show that it is $n-1$ when $F=F_0 (\!( \alpha_1)\!) \dots(\!(\alpha_n)\!)$ and $F_0$ is algebraically closed of characteristic not $p$. We conclude the paper with examples of pairs of cyclic algebras of odd prime degree $p$ over a field $F$ for which $\operatorname{Brd}_p(F)=2$ that share no maximal subfields despite their tensor product being non-division.
 \end{abstract}

\begin{keyword}
Division Algebras, Cyclic Algebras, Valuation Theory, Linkage, Fields of Positive Characteristic
\MSC[2010] 16K20 (primary); 16W60 (secondary)
\end{keyword}
\end{frontmatter}

\section{Introduction}

The Brauer dimension $\operatorname{Brd}(F)$ of a field $F$ is defined to be the minimal $m$ such that for any central simple algebra $A$ over $E$, $\operatorname{ind}(A)|\exp(A)^m$, where $E$ ranges over the finite field extensions of $F$.
Since $\operatorname{ind}(A)$ is also the minimal degree of a splitting field of $A$, an equivalent definition would be that every central simple algebra of exponent $e$ has a splitting field of degree dividing $e^m$.
If we consider only central simple algebras $A$ of $p$-power exponent, then the minimal $m$ is called the Brauer $p$-dimension $\operatorname{Brd}_p(F)$.
This invariant has been studied by many mathematicians. See \cite{Chipchakov:2019} and \cite{BhaskharHaase:2020} for recent results on the subject.

Inspired by Gosavi's notion of ``generalized Brauer dimension" (see \cite{gosavi2019generalized} and \cite{gosavi}),
we introduce the following generalization of the Brauer $p$-dimension: the Brauer $(p,\ell)$-dimension of a field $F$, denoted
$\operatorname{Brd}_{p,\ell}(F)$, is defined to be the minimal $m$ such that for any finite sequence of $\ell$ central simple $E$-algebras $A_1,\dots,A_\ell$ whose exponents are powers of $p$ there exists a common splitting field of degree dividing $t^m$, where $t=\max(\exp(A_1),\dots,\exp(A_\ell))$ and $E$ ranges over all the finite field extensions of $F$.
We define the asymptotic Brauer $p$-dimension of $F$, denoted $\operatorname{ABrd}_p(F)$, to be $\lim_{\ell\rightarrow \infty} \operatorname{Brd}_{p,\ell}(F)$.
Clearly $\operatorname{Brd}_p(F) \leq \operatorname{ABrd}_p(F)$.

For fields of characteristic $p$, there is an easy upper bound on the asymptotic Brauer $p$-dimension, which is the $p$-rank of the field, i.e., the number $n$ for which $[F:F^p]=p^n$.
Since the $p$-rank does not change under finite field extensions (see for example \cite[Chapter V, Section 16.6, Corollary 3]{Bourbaki}), for any finite field extension $E$ of $F$, the $p$-rank of $E$ remains $n$, and so ${_t}\operatorname{Br}(E)$ is completely trivialized by $E(\sqrt[t]{\beta_1},\dots,\sqrt[t]{\beta_n})$ where $\beta_1,\dots,\beta_n$ is a $p$-basis for $E$ and $t$ is any power of $p$ (see for example \cite[Corollary 1.2]{ParimalaSuresh:2014}). Since this is a field extension of degree $t^n$, we have $\operatorname{ABrd}_p(F) \leq n$. 
Such fields include $F_0(\alpha_1,\dots,\alpha_n)$ and $F_0(\!(\alpha_1)\!) \dots (\!(\alpha_n)\!)$ where $F_0$ is a perfect field of $\operatorname{char}(F_0)=p$.
In particular, if $F=\mathbb{F}_p(\alpha_1,\dots,\alpha_n)$  then there exists a division algebra of exponent $p$ and index $p^n$ over $F$ (see \cite[Section 5]{BhaskharHaase:2020}), hence $\operatorname{Brd}_p(F) \geq n$, but since $\operatorname{Brd}_p(F)\leq \operatorname{ABrd}_p(F)\leq n$, we have $\operatorname{Brd}_p(F)= \operatorname{ABrd}_p(F)= n$.

Our main goal in this paper is to prove the following:
\begin{thm}\label{GB}
For any perfect field $F_0$ of characteristic $p>0$ and integer $n\geq 2$, $\operatorname{ABrd}_p(F)=n$ where $F$ is either $F_0(\!(\alpha_1)\!) \dots (\!(\alpha_n)\!)$ or $F_0(\alpha_1,\dots,\alpha_n)$.
\end{thm}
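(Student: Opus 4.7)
The upper bound $\operatorname{ABrd}_p(F) \leq n$ is already established in the introduction using $[F : F^p] = p^n$. Since $\operatorname{Brd}_p(F) \leq \operatorname{ABrd}_p(F)$, the matching lower bound reduces to exhibiting a single central division $F$-algebra of exponent $p$ and index $p^n$. We plan to handle both types of $F$ uniformly via the following reduction: if $D$ is a tensor product of cyclic $p$-symbol algebras with all parameters in the subfield $F_0(\alpha_1,\ldots,\alpha_n)$, and its base change to the Laurent series field $F_0(\!(\alpha_1)\!)\cdots(\!(\alpha_n)\!)$ is division of index $p^n$, then $D$ is already division of index $p^n$ over $F_0(\alpha_1,\ldots,\alpha_n)$, since scalar extension cannot increase the index. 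It therefore suffices to construct such a $D$ in the Laurent series setting.

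Here $F=F_0(\!(\alpha_1)\!)\cdots(\!(\alpha_n)\!)$ is a rank-$n$ Henselian valued field with value group $\mathbb{Z}^n$ (ordered lexicographically) and residue field $F_0$. We propose as candidate the Artin--Schreier tensor
\[
D \;=\; \bigotimes_{i=1}^{n} [a_i,\alpha_i)_p,
\]
where $[a_i,\alpha_i)_p$ is the degree-$p$ cyclic algebra generated by $y_i,x_i$ with $y_i^p - y_i = a_i$, $x_i^p = \alpha_i$, and $x_i y_i x_i^{-1} = y_i + 1$. The parameters $a_i\in F_0(\alpha_1,\ldots,\alpha_n)^\times$ are chosen so that (i) each $y^p - y = a_i$ is nonsplit over $F$ (so that $a_i$ carries a valuation coordinate that is negative and coprime to $p$), and (ii) the $n$ Artin--Schreier extensions ramify in independent coordinate directions, so that collectively the $n$ cyclic factors are ``valuation-independent'' and leave $D$ with no common splitting field of degree smaller than $p^n$. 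A convenient concrete choice is $a_i = \alpha_{\sigma(i)}^{-1}$ for a fixed derangement $\sigma$ of $\{1,\ldots,n\}$ (which exists precisely because $n\geq 2$).

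To verify $\operatorname{ind}(D) = p^n$, we argue by induction on $n$, peeling off the outermost cyclic factor along the tower
\[
F \supset F_0(\!(\alpha_1)\!)\cdots(\!(\alpha_{n-1})\!) \supset \cdots \supset F_0(\!(\alpha_1)\!) \supset F_0,
\]
invoking at each step the standard index computation for a cyclic degree-$p$ algebra over a Henselian field of residue characteristic $p$, and identifying the residue algebra as a tensor product of $n-1$ cyclic symbols over the next inner field. Perfectness of $F_0$ keeps the residue analysis clean at each level and allows the inductive step to close.

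The central obstacle is precisely this residue-characteristic-$p$ index computation. The clean tame-symbol machinery, under which tensor products of symbol algebras multiply indices, is unavailable; one must instead track value groups and residue division algebras of cyclic $p$-algebras through wildly ramified extensions, using the valuation-theoretic framework for division algebras over Henselian fields developed by Tignol and Wadsworth. This is where the bulk of the technical work concentrates; the parallel claim in residue characteristic coprime to $p$ would follow from much softer tools.
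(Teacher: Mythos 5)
Your reduction of the lower bound to ``exhibiting a single central division $F$-algebra of exponent $p$ and index $p^n$'' is exactly the step that cannot work, and avoiding it is the whole point of the theorem. By Chipchakov's result quoted in the introduction, $\operatorname{Brd}_p\bigl(F_0(\!(\alpha_1)\!)\cdots(\!(\alpha_n)\!)\bigr)=n-1$ whenever $\dim_{\mathbb{F}_p}(F_0/\wp(F_0))<n$, which holds for instance for every algebraically closed $F_0$ of characteristic $p$ (a perfect field covered by the theorem). Over such $F$ every central simple algebra of exponent $p$ has index at most $p^{n-1}$, so no algebra of the kind you want exists; concretely, your candidate $D=\bigotimes_{i=1}^n[\alpha_{\sigma(i)}^{-1},\alpha_i)_p$ has exponent dividing $p$ and hence index at most $p^{n-1}$ (at most $p$ when $n=2$ and $F_0$ is algebraically closed), not $p^n$. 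The inequality $\operatorname{Brd}_p(F)\leq\operatorname{ABrd}_p(F)$ only transfers lower bounds \emph{from} the single-algebra invariant, and here that invariant is genuinely smaller than $n$; the theorem is interesting precisely because $\operatorname{ABrd}_p$ can exceed $\operatorname{Brd}_p$.

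What the paper does instead is construct a finite \emph{family} of division algebras, each of degree and index only $p^{n-1}$ and exponent $p$ (the algebras $A_i$ of Lemma \ref{Shift} and the $B_{d_1,\dots,d_n}$ of Remark \ref{More}), and show they admit no \emph{common} splitting field of degree $p^{n-1}$. The mechanism is valuation-theoretic but of a different flavor from your inductive residue analysis: a common degree-$p^{n-1}$ splitting field $K$ would embed in each algebra, forcing $\Gamma_K\subseteq\bigcap_i\Gamma_{A_i}=\frac1p\mathbb{Z}\times\cdots\times\frac1p\mathbb{Z}$, and then a count of the possible values of the trace-zero subspace of $K$ against the values realized by trace-zero elements of the various $B_{d_1,\dots,d_n}$ yields $p^{n-2}\geq p^{n-1}-1$, a contradiction. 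Your preliminary reduction from $F_0(\alpha_1,\dots,\alpha_n)$ to the Laurent series field is sound and matches the paper's, and Morandi's theorem does play the role you anticipate in proving the individual algebras are division; but the proof cannot be salvaged without replacing the single algebra by a family and the index computation by a common-splitting-field obstruction.
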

Note that the Brauer $p$-dimension of $F=F_0(\!(\alpha_1)\!) \dots (\!(\alpha_n)\!)$ is known to be $n-1$ when $\dim_{\mathbb{F}_p}(F_0/\wp(F_0))<n$ and $n$ when $\dim_{\mathbb{F}_p}(F_0/\wp(F_0))\geq n$ by \cite[Proposition 5.3]{Chipchakov:2019}, so when $\dim_{\mathbb{F}_p}(F_0/\wp(F_0))<n$ (e.g., when $F_0$ is $\mathbb{F}_p$ or $\mathbb{F}_p^{sep}$) $F$ is a field whose asymptotic Brauer $p$-dimension is strictly larger than its Brauer $p$-dimension.
In Section \ref{Main} we prove this theorem by explicitly constructing a finite set of division algebras of exponent $p$ which have no common splitting field of degree $p^{n-1}$. The proof uses techniques on valued division algebras (see \cite{GilleSzamuely:2006} for background on division algebras and \cite{TignolWadsworth:2015} for their valuation theory).

We conclude the paper with another nice application of the main theorem of \cite{Morandi:1989} in the construction of examples of pairs of cyclic algebras of prime degree $p>2$ over a field $F$ of $\operatorname{char}(F)=p$ that share no maximal subfields despite their tensor product containing zero divisors. 
Such examples were first constructed in \cite{TignolWadsworth:1987} for $\operatorname{char}(F) \neq p$, then improved in \cite{JacobWadsworth:1993} to include also pairs of cyclic division algebras $A$ and $B$ of odd prime degree $p$ with no common maximal subfields, although $\operatorname{ind}(A^{\otimes i} \otimes B)=p$ for any $i$. More examples were constructed in \cite{Karpenko:1999} where over any field $F$ (without assumptions on the characteristic) any pair of algebras $A$ and $B$ with $A\otimes B$ division can be scalar extended to some field $L$ over which $\operatorname{ind}(A_L^{\otimes i} \otimes B_L)=p$ but $A_L$ and $B_L$ still share no common subfields. The example we give here, we believe, is neat enough to merit its inclusion, since it is over the field $F$ of iterated Laurent series in three variables over any field $F_0$ of characteristic $p$, as opposed to the generically constructed fields of \cite{Karpenko:1999}, and in our example, $F$ can be chosen to have Brauer $p$-dimension 2 (e.g., when $F_0$ is algebraically closed), which is lower than in Karpenko's example, and leaves the question of whether such examples exist when $\operatorname{Brd}_p(F)=1$. Our example is analogous to \cite{TignolWadsworth:1987} and not to \cite{JacobWadsworth:1993} or \cite{Karpenko:1999} because $A \otimes B$ is not division but $A^{\otimes i} \otimes B$ is for some $i \in \{2,\dots,p-1\}$.
\section{Preliminaries}

Recall that when $\operatorname{char}(F)=p$, the $p$-torsion ${_p}\operatorname{Br}(F)$ of $\operatorname{Br}(F)$ is generated by algebras of the form
$F \langle x,y: x^p-x=\alpha, y^p=\beta, yxy^{-1}=x+1 \rangle$,
for some $\alpha \in F$ and $\beta \in F^\times$, which we denote by the symbol $[\alpha,\beta)_{p,F}$, and thus these algebras are called ``symbol algebras". They are also cyclic algebras, for $F[x]/F$ is a cyclic extension.

Note that as a vector space, the algebra $[\alpha,\beta)_{p,F}$ decomposes as $\bigoplus_{i,j=0}^{p-1} F x^i y^j$, and for any $v=\sum_{i,j=0}^{p-1} c_{i,j} x^i y^j$ the trace form maps $v$ to $\tr(v)=-c_{p-1,0}$ (see \cite[Lemma 2.1]{Chapman:2021b}).
Now, if $A$ and $B$ are central simple algebras over $F$ and $r \in A$ and $t \in B$, then the trace of $r\otimes t$ in $A\otimes B$ is the product of the trace of $r$ in $A$ and the trace of $t$ in $B$.
Consequently, the subspace of trace zero elements in $[\alpha_1,\beta_1) \otimes \dots \otimes [\alpha_n,\beta_n)$ generated by $x_1,y_1,\dots,x_n,y_n$ is spanned by all $x_1^{d_1} y_1^{e_1} \dots x_n^{d_n} y_n^{e_n}$ except for the case of $d_1=\dots=d_n=p-1$ and $e_1=\dots=e_n=0$. In particular, it is of co-dimension 1, i.e., of dimension $p^{2n}-1$.

 For a given valued division algebra $D$ we follow the notation of \cite{TignolWadsworth:2015} for the residue division algebra $\overline{D}$ and the value group $\Gamma_D$ and recall freely several facts from that book.
In particular, if a field $F$ is equipped with a Henselian valuation (e.g., the field $F=F_0(\!(\alpha_1)\!)\dots(\!(\alpha_n)\!)$ with the right-to-left $(\alpha_1,\dots,\alpha_n)$-adic valuation), then the valuation extends to any central division algebra $D$ over $F$.
Another fact we will use is the fundamental inequality $[D:F] \geq |\Gamma_D/\Gamma_F| \cdot [\overline{D}:\overline{F}]$ (see \cite[Section 1.2.1]{TignolWadsworth:2015}).

\section{Equivalent conditions}

\begin{thm}[{cf. \cite[Lemma 1.1]{ParimalaSuresh:2014}}] \label{EC}
Let $p$ be a prime integer and $F$ be a field.
Then $\operatorname{Brd}_{p,\ell}(F)\leq n$ if and only if for any finite extension $E/F$, every $\ell$ algebras of exponent $p$ over $E$ have a common degree $p^n$ splitting field.
\end{thm}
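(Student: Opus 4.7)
The ``only if'' direction is immediate from the definition. If $\operatorname{Brd}_{p,\ell}(F)\leq n$ and $A_1,\dots,A_\ell$ are central simple algebras of exponent dividing $p$ over some finite extension $E/F$, then $t=\max_i \exp(A_i)$ divides $p$, so by definition there exists a common splitting field of degree dividing $t^n$, which in turn divides $p^n$.

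For the converse, I would reduce the case of arbitrary $p$-power exponent to the exponent-$p$ case by induction on $e$, where $p^e = t = \max_i \exp(A_i)$, mimicking the strategy of \cite[Lemma 1.1]{ParimalaSuresh:2014}. Fix a finite extension $E/F$ and algebras $A_1,\dots,A_\ell$ of exponents $p^{e_1},\dots,p^{e_\ell}$ with $e = \max_i e_i$. The base cases $e=0$ (all algebras split, so take $E$ itself) and $e=1$ (apply the hypothesis directly, after discarding trivial algebras) are clear. For the inductive step $e \geq 2$, consider the algebras $B_i = A_i^{\otimes p}$, whose exponents all divide $p^{e-1}$. By the inductive hypothesis applied over $E$, the $B_i$ admit a common splitting field $L/E$ of degree dividing $p^{(e-1)n}$. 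Since $L/F$ is then a finite extension, the hypothesis can be reapplied over $L$: each $A_i \otimes_E L$ has exponent dividing $p$, since its $p$-th tensor power $B_i \otimes_E L$ is split. Hence there exists a common splitting field $M/L$ of the $A_i \otimes_E L$ of degree dividing $p^n$, and $M$ is then a common splitting field of $A_1,\dots,A_\ell$ of degree
\[
[M:E] \;=\; [M:L]\,[L:E] \;\Bigm|\; p^n \cdot p^{(e-1)n} \;=\; p^{en} \;=\; t^n,
\]
as required.

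The argument is essentially formal, and I do not anticipate a genuine obstacle. The only point that requires a small check is that the intermediate field $L$ produced by the inductive hypothesis is itself a finite extension of $F$ (not merely of $E$), so that the exponent-$p$ hypothesis may legitimately be invoked over $L$; this is automatic since a tower of finite extensions is finite.
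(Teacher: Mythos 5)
Your proposal is correct and follows essentially the same route as the paper: the forward direction is immediate, and the converse is proved by induction on the exponent, tensoring by $p$ to drop the exponent, obtaining an intermediate splitting field $L$ by the inductive hypothesis, and then applying the exponent-$p$ hypothesis over $L$ to the restricted algebras. The degree count $p^n\cdot p^{(e-1)n}=p^{en}$ matches the paper's computation exactly.
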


\begin{proof}
The forward implication is clear. To prove the backwards implication we need to prove that if for any finite extension $E/F$, every sequence of $\ell$ algebras of exponent $p$ over $E$ have a common degree $p^n$ splitting field, then for any $m \in \mathbb{N}$ and any finite extension $E/F$, every sequence of $\ell$ algebras of exponent dividing $p^m$ over $E$ have a common degree $(p^m)^n$ splitting field.

The proof is by induction on $m$.
The case of $m=1$ coincides with the assumption.
Suppose the statement holds for $1,\dots,m-1$.
Consider a sequence of $\ell$ algebras $A_1,\dots,A_\ell$ of exponents dividing $p^m$ over a finite extension $E$ of $F$.
Then $A_1^{\otimes p},\dots,A_\ell^{\otimes p}$ are of exponents dividing $p^{m-1}$, which share a splitting field $K$ of degree $(p^{m-1})^n$ over $E$ by the induction hypothesis.
Now, the algebras $A_1 \otimes K,\dots,A_\ell \otimes K$ are of exponent dividing $p$, and thus by the assumption, have a degree $p^n$ splitting field $L$ over $K$. Hence, $L$ is a splitting field for $A_1,\dots,A_\ell$, and it is of degree $(p^{m-1})^n\cdot p^n=(p^m)^n$ over $E$.
\end{proof}

\begin{exmpl}\label{ECE}
Let $F_0$ be an algebraically closed field, let $F$ be either the function field $F_0(\alpha,\beta)$ in two variables over $F_0$ or the field $F_0(\!(\alpha)\!)(\!(\beta)\!)$. Let $E$ be a finite extension of $F$. 
For $p=2$, every three quaternion algebras over $E$ share a quadratic splitting field (see \cite{Chapman:2021}), and so $\operatorname{Brd}_{2,3}(F)=1$ by Theorem \ref{EC}.
For $p=3$, every two algebras of exponent $3$ over $E$ share a cubic splitting field, and so $\operatorname{Brd}_{3,2}(F)=1$ by Theorem \ref{EC}. For any prime $p$, it follows from \cite{Chapman:2021b} and \cite{ChapmanTignol:2019} that $\operatorname{Brd}_{p,p^2-1}(F)>1$ when $\operatorname{char}(F)=p>2$, and $\operatorname{Brd}_{p,p^2}(F)>1$ when $\operatorname{char}(F)=0$. Therefore, given the $p$-rank upper bound $\abd_p(F)\leq 2$, when $\operatorname{char}(F)=p>2$, $\operatorname{Brd}_{p,p^2-1}(F)=\operatorname{ABrd}_p(F)=2$.
\end{exmpl}

\begin{cor}
For any field $F$, $\operatorname{Brd}_2(F)=1$ if and only if $\operatorname{Brd}_{2,2}(F)=1$.
\end{cor}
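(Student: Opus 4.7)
The equivalence splits into an easy direction and a substantive one. For $\operatorname{Brd}_{2,2}(F)=1 \Rightarrow \operatorname{Brd}_2(F)=1$, I will use the tautology $\operatorname{Brd}_2(F)=\operatorname{Brd}_{2,1}(F)\leq \operatorname{Brd}_{2,2}(F)$; the lower bound $\operatorname{Brd}_2(F)\geq 1$ is forced here, since $\operatorname{Brd}_2(F)=0$ would make $\operatorname{Brd}_{2,2}(F)=0$ as well.

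The real content is the forward implication $\operatorname{Brd}_2(F)=1 \Rightarrow \operatorname{Brd}_{2,2}(F)=1$. My plan is to invoke \Tref{EC} to reduce to showing: over every finite extension $E/F$, any two exponent-$2$ central simple $E$-algebras $A$ and $B$ share a common degree-$2$ splitting field. Under the hypothesis, each of $A$ and $B$ is split or quaternion, and the tensor product $A\otimes_E B$ also has exponent dividing $2$; so $\operatorname{Brd}_2(F)=1$ forces $\operatorname{ind}(A\otimes_E B)\leq 2$.

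The final step is to apply Albert's common subfield theorem: two quaternion $E$-algebras $A,B$ with $\operatorname{ind}(A\otimes_E B)\leq 2$ share a common quadratic subfield $E'$, which is automatically a common degree-$2$ splitting field. The degenerate cases where $A$ or $B$ is split are immediate from $\operatorname{Brd}_2(E)\leq \operatorname{Brd}_2(F)\leq 1$, since then any quadratic splitting field of the non-split factor works.

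The main obstacle I foresee is invoking Albert's theorem in the correct characteristic-$2$ form: there, a symbol algebra $[\alpha,\beta)_{2,E}$ admits both separable and purely inseparable quadratic subfields, so one must appeal to the analogue of Albert's theorem (due to Draxl, with antecedents in Albert's own work) that handles common subfields of either flavor. Once the right version is cited, the argument requires no further computation.
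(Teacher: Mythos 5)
Your proof is correct and follows essentially the same route as the paper: reduce via \Tref{EC} to exponent-$2$ algebras over finite extensions $E/F$, observe that $\operatorname{Brd}_2(F)=1$ forces $A\otimes_E B$ to be non-division, and conclude by Albert's common (quadratic) subfield theorem. The paper's version is terser (it leaves the \Tref{EC} reduction and the characteristic-$2$ caveat implicit, citing \cite{Albert:1972} for the common splitting field), but the substance is identical.
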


\begin{proof}
Suppose $\operatorname{Brd}_2(F)=1$.
Then for any finite extension $E$ of $F$, any biquaternion algebra $A\otimes B$ over $E$ contains zero divisors, and therefore $A$ and $B$ share a quadratic splitting field by Albert (see \cite{Albert:1972}). Therefore $\operatorname{Brd}_{2,2}(F)=1$.
The opposite direction is clear.
\end{proof}

\begin{exmpl}
When $F_0$ is algebraically closed, the field of iterated Laurent series $F=F_0(\!(\alpha)\!)(\!(\beta)\!)(\!(\gamma)\!)$ in three variables over $F_0$ has $\operatorname{Brd}_2(F)=1$ by \cite{Chipchakov:2019}, and therefore also $\operatorname{Brd}_{2,2}(F)=1$.
However, $\operatorname{Brd}_{2,3}(F)> 1$ because if it were $1$, by \cite{ChapmanDolphinLeep} and \cite{Becher} we would have $I_q^3 F=0$, but $I_q^3 F \neq 0$.
Since for every three quaternion algebras over a finite extension $E$ of $F$, the first two have a common quadratic splitting field $K$ and the last has a quadratic splitting field $L$, the three algebras have the compositum $K.L$ as the common bi-quadratic splitting field, and thus $\operatorname{Brd}_{2,3}(F)=2$.
\end{exmpl}

\begin{ques}
Does $\operatorname{Brd}_p(F)=1$ imply $\operatorname{Brd}_{p,2}(F)=1$ for every prime $p$?
\end{ques}
It is known that Albert's theorem \cite{Albert:1972} does not extend to algebras of exponent $p>2$. However, the existing examples in the literature (see \cite{TignolWadsworth:1987}, \cite{JacobWadsworth:1993}, \cite{Karpenko:1999} and Section \ref{charp}) are constructed over fields $F$ with $\operatorname{Brd}_p(F) \neq 1$, and therefore do not resolve the question.
\section{Iterated Laurent series over perfect fields}\label{Main}

In this section we prove Theorem \ref{GB}.
We focus on $F=F_0(\!(\alpha_1)\!)\dots(\!(\alpha_n)\!)$ for convenience, but all the algebras under discussion are defined over $F_0(\alpha_1,\dots,\alpha_n)$, and one obtains the inexistence of a common degree $p^{n-1}$ splitting field from the inexistence of such a splitting field under restriction of these algebras to $F_0(\!(\alpha_1)\!)\dots(\!(\alpha_n)\!)$.
In what follows, $F=F_0(\!(\alpha_1)\!)\dots(\!(\alpha_n)\!)$ where $F_0$ is a field of characteristic $p>0$ and $n \geq 2$. Note that for $n=1$, all the central simple algebras over $F$ of exponent dividing $p^\ell$ are split by $F[\sqrt[p^\ell]{\alpha_1}]$ and the Asymptotic Brauer $p$-dimension is 1 (unless $H_p^1(F_0)=0$, in which case it is 0).

We cite the following useful result from \cite[Theorem 1]{Morandi:1989}: If $F$ is a Henselian valued field, $D$ and $E$ are division algebras over $F$ such that $D$ is defectless, $\overline{D} \otimes \overline{E}$ is a division algebra and $\Gamma_D \cap \Gamma_E=\Gamma_F$, then $D\otimes E$ is a division algebra as well.

\begin{lem}\label{Shift}
For $n \geq 3$, pick $i \in \{1,\dots,n-1\}$ and write $\beta_1,\dots,\beta_{n-2}$ for $\alpha_1,\dots,\widehat{\alpha_i},\dots,\alpha_{n-1}$. Then the algebra
$$A_i=[\alpha_n^{-1},\beta_{n-2})_{p,F} \otimes [\beta_{n-2}^{-1},\beta_{n-3})_{p,F} \otimes \dots \otimes [\beta_2^{-1},\beta_1)_{p,F} \otimes [\beta_1^{-1},\alpha_i)_{p,F}$$
is a division algebra.
\end{lem}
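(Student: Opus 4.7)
The plan is to argue by induction on $n$, working with the $\alpha_n$-adic Henselian valuation $v_n$ on $F$ and invoking Morandi's theorem from the preliminaries. Let $F' := F_0(\!(\alpha_1)\!)\cdots(\!(\alpha_{n-1})\!)$ denote the $v_n$-residue field. The leading factor $D_1 = [\alpha_n^{-1}, \beta_{n-2})_{p,F}$ is a division algebra: any extension of $v_n$ forces $v_n(x_1) = -1/p$, so $\Gamma_{D_1}/\Gamma_F \cong \mathbb{Z}/p$, and perfectness of $F_0$ ensures the residue $\overline{D_1} = F'[\beta_{n-2}^{1/p}]$ is a purely inseparable field extension of degree $p$. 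Each remaining factor $D_k$ for $k \geq 2$ has both slots in $v_n$-units, so it is $v_n$-inertial with residue the symbol algebra $[\beta_{n-k}^{-1}, \beta_{n-k-1})_{p,F'}$ over $F'$.

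Write $A_i = D_1 \otimes E$ with $E := D_2 \otimes \cdots \otimes D_{n-1}$. Morandi's value-group condition $\Gamma_{D_1} \cap \Gamma_E = \Gamma_F$ is automatic, since $E$ is $v_n$-inertial while $D_1$ introduces the new ramification direction $-\tfrac{1}{p}v_n(\alpha_n)$. Everything therefore reduces to verifying that $\overline{D_1} \otimes_{F'} \overline{E}$ is a division algebra over $F'[\beta_{n-2}^{1/p}]$. In the principal case $i \leq n-2$ one has $\beta_{n-2} = \alpha_{n-1}$, so
\[\overline{E} = [\alpha_{n-1}^{-1}, \beta_{n-3}) \otimes [\beta_{n-3}^{-1}, \beta_{n-4}) \otimes \cdots \otimes [\beta_1^{-1}, \alpha_i)\]
has the lemma's form for $F'$ with the same $i$, hence is division by the inductive hypothesis. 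After extending scalars to $F'[\gamma]$ with $\gamma := \alpha_{n-1}^{1/p}$, the Artin--Schreier substitution $x \mapsto x + \gamma^{-1}$ applied in the leading Artin--Schreier generator rewrites $[\gamma^{-p}, \beta_{n-3})$ in the Brauer group as $[\gamma^{-1}, \beta_{n-3})$. Recognising $F'[\gamma]$ as an iterated Laurent series in $n-1$ variables $\alpha_1, \dots, \alpha_{n-2}, \gamma$, the rewritten algebra again has the lemma's form and is division by the inductive hypothesis; Morandi then yields $A_i$ division.

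The exceptional case $i = n-1$ forces $\beta_{n-2} = \alpha_{n-2}$, so
\[\overline{E} = [\alpha_{n-2}^{-1}, \alpha_{n-3}) \otimes \cdots \otimes [\alpha_2^{-1}, \alpha_1) \otimes [\alpha_1^{-1}, \alpha_{n-1})\]
does not match the lemma over $F'$: the outermost variable $\alpha_{n-1}$ of $F'$ appears only in the trailing second slot. My plan for this case is to iterate Morandi one level deeper, using the $\alpha_{n-1}$-adic valuation on $F'$. The final factor $[\alpha_1^{-1}, \alpha_{n-1})$ is $v_{n-1}$-semiramified, residue the Artin--Schreier extension $F''[\bar{x}]/(\bar{x}^p - \bar{x} - \alpha_1^{-1})$, while the preceding $v_{n-1}$-inertial piece has $F''$-residue $[\alpha_{n-2}^{-1}, \alpha_{n-3}) \otimes \cdots \otimes [\alpha_2^{-1}, \alpha_1)$ over $F'' := F_0(\!(\alpha_1)\!)\cdots(\!(\alpha_{n-2})\!)$, which coincides with the lemma's algebra for $F''$ with $n-2$ variables and index $1$, hence is division by induction. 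A parallel Brauer bookkeeping (incorporating the outer $\alpha_{n-2}^{1/p}$ extension) delivers the joint residue division, and a second application of Morandi closes the case.

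The main obstacle is the bookkeeping in the exceptional case $i=n-1$: each scalar extension by a $p$-th root of some $\alpha_j$ must land in a field still recognisable as an iterated Laurent series, and each Artin--Schreier rewriting must preserve the chain structure so the inductive hypothesis can be reapplied. Perfectness of $F_0$ is essential throughout, guaranteeing that no $\alpha_j$ accidentally becomes a $p$-th power and hence every intermediate residue extension remains of the required degree.
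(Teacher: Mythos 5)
Your decomposition is genuinely different from the paper's: you peel off the \emph{first} factor $[\alpha_n^{-1},\beta_{n-2})$ and work with the $\alpha_n$-adic valuation, whose residue field $F'$ is large, so the residue-division condition in Morandi's theorem becomes a statement about a degree $p^{n-2}$ algebra over $F'$ that you must handle by recursion. The paper instead peels off the \emph{last} factor $[\beta_1^{-1},\alpha_i)=[\alpha_1^{-1},\alpha_i)$ (for $i>1$, so $\beta_1=\alpha_1$) and uses the $(\alpha_2,\dots,\alpha_n)$-adic valuation, whose residue field is the tiny field $F_0(\!(\alpha_1)\!)$; then $\overline{D}=\overline{F}[y:y^p=\alpha_1]$, $\overline{E}=\overline{F}[x:x^p-x=\alpha_1^{-1}]$, and the residue condition collapses to checking that $\overline{F}[x,y]$ is a field (generated by $z=x-y$ with $z^{p^2}-z^p=\alpha_1^{-1}$), with the divisionness of the first $n-2$ factors cited from earlier work. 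Your principal case $i\leq n-2$ does go through: the rewriting $[\gamma^{-p},\beta_{n-3})\sim[\gamma^{-1},\beta_{n-3})$ over $F'[\gamma]=F_0(\!(\alpha_1)\!)\cdots(\!(\alpha_{n-2})\!)(\!(\gamma)\!)$ is correct, the rewritten algebra has the same degree $p^{n-2}$ as $\overline{D_1}\otimes_{F'}\overline{E}$, and the inductive hypothesis applies (you should also note explicitly that $\overline{E}$ division forces the inertial lift $E$ to be division, which Morandi requires).

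The genuine gap is the exceptional case $i=n-1$, which you only sketch. Your inner application of Morandi to $\overline{E}=P\otimes Q$ over $F'$, with $Q=[\alpha_1^{-1},\alpha_{n-1})$ semiramified, requires $\overline{P}\otimes_{F''}\overline{Q}$ to be division, i.e.\ the chain $[\alpha_{n-2}^{-1},\alpha_{n-3})\otimes\cdots\otimes[\alpha_2^{-1},\alpha_1)$ must remain division over the Artin--Schreier extension $F''[x:x^p-x=\alpha_1^{-1}]$ --- not over $F''$ itself, which is all the induction hypothesis gives you. This is a materially different statement: writing $u=x^{-1}$, one has $F_0(\!(\alpha_1)\!)[x]=F_0(\!(u)\!)$ with $\alpha_1=u^p(1-u^{p-1})^{-1}$, so in the trailing symbol the slot $\alpha_1$ becomes a $p$-th power times a $1$-unit and $[\alpha_2^{-1},\alpha_1)$ degenerates to $[\alpha_2^{-1},(1-u^{p-1})^{-1})$; the chain no longer has the lemma's form and its index must be re-analyzed from scratch. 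The subsequent step of folding in the purely inseparable extension $F'[\alpha_{n-2}^{1/p}]$ coming from $\overline{D_1}$ is likewise left as ``parallel Brauer bookkeeping,'' but that is exactly where the content lies. Note also that you cannot rescue this case by relabelling variables to match the lemma for $n-1$ variables: $\overline{E}$ lives over $F_0(\!(\alpha_1)\!)\cdots(\!(\alpha_{n-2})\!)(\!(\alpha_{n-1})\!)$, whereas the relabelled instance of the lemma lives over the differently ordered completion $F_0(\!(\alpha_1)\!)\cdots(\!(\alpha_{n-1})\!)(\!(\alpha_{n-2})\!)$, and divisionness over one does not transfer to the other. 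The paper's choice of valuation avoids this entirely, because both residues land in $F_0(\!(\alpha_1)\!)$ regardless of $i$.
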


\begin{proof}
The case of $i=1$ was covered in \cite{Chapman:2020}.
Suppose $i>1$.
Write $D=[\alpha_n^{-1},\beta_{n-2})_{p,F} \otimes \dots \otimes [\beta_2^{-1},\beta_1)_{p,F}$ and $E=[\beta_1^{-1},\alpha_i)_{p,F}$.
The algebra $D$ has degree $p^{n-2}$ and is division by \cite{Chapman:2020}. The algebra $E$ is division because the values of $\beta_1^{-1}$ and $\alpha_i$ are $\mathbb{F}_p$-independent in $\Gamma_F/p\Gamma_F$ w.r.t the standard valuation on $F$.
Consider the $(\alpha_2,\dots,\alpha_n)$-adic valuation on $F$.
The algebra $D$ is defectless, because $|\Gamma_D/\Gamma_F|=p^{2n-5}$ and $\overline{D}=\overline{F}[y : y^p=\alpha_1]$. Moreover, $\Gamma_D \cap \Gamma_E=\Gamma_F$ and $\overline{D} \otimes \overline{E}=\overline{F}[x,y : x^p-x=\alpha_1^{-1}=y^p]$ is the field generated over $\overline{F}=F_0(\!(\alpha_1)\!)$ by $z=x-y$ satisfying $z^{p^2}-z^p=\alpha_1^{-1}$. By \cite{Morandi:1989} $D\otimes E$ is division.
\end{proof}

Since this lemma referred to the cases of $n\geq 3$, we can add that for $n=2$, we can define the algebra $A_1=[\alpha_2^{-1},\alpha_1)_{p,F}$, which is division because the values of $\alpha_2^{-1}$ and $\alpha_1$ are $\mathbb{F}_p$-independent in $\Gamma_F/p\Gamma_F$ w.r.t the standard valuation on $F$.

\begin{prop}
Consider from now on the $(\alpha_1,\dots,\alpha_n)$-adic valuation on $F$, denoted by $\mathfrak{v}$.
For each $i \in \{1,\dots,n-1\}$, the algebra $A_i$ is totally ramified and its value group $\Gamma_{A_i}$ is 
$$\frac{1}{p^2} \mathbb{Z} \times \dots \times \frac{1}{p^2} \mathbb{Z} \times \underbrace{\frac{1}{p} \mathbb{Z}}_{i\text{th place}} \times \frac{1}{p^2} \mathbb{Z} \times \dots \times \frac{1}{p^2} \mathbb{Z} \times \frac{1}{p} \mathbb{Z}.$$
As a result, $\bigcap_{i=1}^{n-1} \Gamma_{A_i}=\frac{1}{p} \mathbb{Z} \times \dots \times \frac{1}{p} \mathbb{Z}$.
\end{prop}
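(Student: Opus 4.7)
\emph{Proof plan.} The plan is, for each $i \in \{1, \dots, n-1\}$, to exhibit enough elements of $A_i$ with small valuations to force $|\Gamma_{A_i}/\Gamma_F| \geq p^{2(n-1)}$. Since $[A_i:F] = p^{2(n-1)}$, the fundamental inequality
$$[A_i:F] \;\geq\; |\Gamma_{A_i}/\Gamma_F| \cdot [\overline{A_i}:\overline{F}]$$
will then force equality in the orders, giving simultaneously $\overline{A_i} = \overline{F} = F_0$ (total ramification) and the precise shape of $\Gamma_{A_i}$.

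Let $x_k, y_k$ denote the standard Artin-Schreier/Kummer generators of the $k$th symbol factor of $A_i$; elements from distinct factors commute. Their naive values lie in $\frac{1}{p}\mathbb{Z}^n$ and, modulo $\Gamma_F$, generate only a subgroup of order $p^n$, short of $p^{2(n-1)}$ by a factor of $p^{n-2}$. The heart of the argument, which I expect to be the main obstacle, is to recover the missing factor by introducing the corrected elements
$$x_k' := x_k - y_{k-1}^{-1} \qquad (k = 2, \dots, n-1).$$
Using $y_{k-1}^p = \beta_{n-k}$, so that $y_{k-1}^{-p} = \beta_{n-k}^{-1}$, together with the characteristic-$p$ identity $(a-b)^p = a^p - b^p$ for commuting $a,b$, one checks that $(x_k')^p = x_k^p - \beta_{n-k}^{-1} = x_k$ and hence $(x_k')^p - x_k' = y_{k-1}^{-1}$. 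Applying the Artin-Schreier valuation rule $\mathfrak{v}(z) = \mathfrak{v}(w)/p$ for $z^p - z = w$ with $\mathfrak{v}(w) < 0$ then yields $\mathfrak{v}(x_k') = -v(\beta_{n-k})/p^2$, a value strictly finer than anything the naive generators produce.

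Next I would tabulate the values modulo $\Gamma_F = \mathbb{Z}^n$: the elements $x_1$, $y_{n-1}$, and $x_2', \dots, x_{n-1}'$ contribute $e_n/p$, $e_i/p$, and $e_j/p^2$ for $j \in \{1, \dots, n-1\} \setminus \{i\}$ respectively, where $\beta_{n-k}$ ranges over $\{\alpha_1,\dots,\alpha_{n-1}\}\setminus\{\alpha_i\}$. These span the subgroup that is $\frac{1}{p^2}\mathbb{Z}/\mathbb{Z}$ in each coordinate $j \in \{1, \dots, n-1\} \setminus \{i\}$ and $\frac{1}{p}\mathbb{Z}/\mathbb{Z}$ in coordinates $i$ and $n$, of order $p^{2(n-2)+1+1} = p^{2(n-1)}$. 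The fundamental inequality then pins down $\Gamma_{A_i}$ exactly and forces $\overline{A_i} = F_0$. The sporadic case $n = 2$ is handled identically with an empty family of $x_k'$.

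Finally, the intersection statement is a coordinate-wise check in $\mathbb{Q}^n$: coordinate $n$ equals $\frac{1}{p}\mathbb{Z}$ in every $\Gamma_{A_i}$, and each coordinate $j \in \{1, \dots, n-1\}$ appears as the ``small'' $\frac{1}{p}\mathbb{Z}$ factor precisely in $\Gamma_{A_j}$ (and as $\frac{1}{p^2}\mathbb{Z}$ in the other $\Gamma_{A_i}$), so the intersection collapses every coordinate to $\frac{1}{p}\mathbb{Z}$, giving $\bigcap_{i=1}^{n-1} \Gamma_{A_i} = (\frac{1}{p}\mathbb{Z})^n$.
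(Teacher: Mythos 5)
Your proposal is correct and follows essentially the same route as the paper: the paper likewise forms, for each pair of adjacent symbol factors, the element $t=x-y^{-1}$ satisfying $t^p-t=y^{-1}$ to produce values in $\frac{1}{p^2}\mathbb{Z}$, adds the two obvious $\frac{1}{p}$-valued elements in coordinates $i$ and $n$, and concludes by the fundamental inequality that all inequalities are equalities, so $\overline{A_i}=\overline{F}$ and $\Gamma_{A_i}$ is as claimed. The intersection computation is the same coordinate-wise check.
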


\begin{proof}
Since $A_i$ is a division algebra and $F$ is Henselian, the valuation extends to $A_i$.
By the fundamental inequality, $[A_i:F] \geq |\Gamma_{A_i}/\Gamma_F| \cdot [\overline{A_i}:\overline{F}]$, it is enough to show that $\Gamma_{A_i} \supseteq \frac{1}{p^2} \mathbb{Z} \times \dots \times \frac{1}{p^2} \mathbb{Z} \times \underbrace{\frac{1}{p} \mathbb{Z}}_{i\text{th place}} \times \frac{1}{p^2} \mathbb{Z} \times \dots \times \frac{1}{p^2} \mathbb{Z} \times \frac{1}{p} \mathbb{Z}$, and then all the inequalities would become equalities, and $\overline{A_i}=\overline{F}$.
For each $k \in \{1,\dots,n-1\} \setminus \{i\}$, $A_i$ contains the subfield $F[x,y: x^p-x=\alpha_k^{-1}, y^p=\alpha_k]$, which is in fact a simple extension of $F$ generated by $t=x-y^{-1}$ satisfying $t^p-t=y^{-1}$, and thus $\mathfrak{v}(t)=\frac{1}{p}\mathfrak{v}(y^{-1})=\frac{1}{p^2}\mathfrak{v}(\alpha_k^{-1})$.
In addition, $A_i$ contains the field $F[x : x^p-x=\alpha_n^{-1}]$, and here $\mathfrak{v}(x)=\frac{1}{p}\mathfrak{v}(\alpha_n^{-1})$, and the field $F[y : y^p=\alpha_i]$, and here $\mathfrak{v}(y)=\frac{1}{p}\mathfrak{v}(\alpha_i)$.
\end{proof}

\begin{rem}\label{More}
In addition to the algebras constructed in Lemma \ref{Shift}, the following algebras are also division algebras, by the same reasoning:
$$B_{d_1,\dots,d_n}=[\prod_{k=1}^n \alpha_k^{-d_k},\alpha_{n-1})_{p,F} \otimes [\alpha_{n-1}^{-1},\alpha_{n-2})_{p,F} \otimes \dots \otimes [\alpha_2^{-1},\alpha_1)_{p,F},$$
where $d_1,\dots,d_{n-1} \in \{0,\dots,p-1\}$ and $d_n \in \{1,\dots,p-1\}$, and 
$$B_{d_1,\dots,d_{n-1},0}=[\prod_{k=1}^{n-1} \alpha_k^{-d_k},\alpha_{n-2})_{p,F} \otimes [\alpha_{n-2}^{-1},\alpha_{n-3})_{p,F} \otimes \dots \otimes [\alpha_2^{-1},\alpha_1)_{p,F} \otimes [\alpha_1^{-1},\alpha_n)_{p,F},$$
where $d_1,\dots,d_{n-2} \in \{0,\dots,p-1\}$ and $d_{n-1} \in \{1,\dots,p-1\}$.
Note that $B_{0,\dots,0,1}=A_1$.
\end{rem}

\begin{thm}
Assuming $p$ is a prime integer, $n\geq 2$ and $(n,p) \neq (2,2)$, the $n-2+p^n-p^{n-2}$ algebras $\{A_i,B_{d_1,\dots,d_n} : i \in \{2,\dots,n-1\},(d_1,\dots,d_n)\in \{0,\dots,p-1\}^{\times n}, (d_{n-1},d_n)\neq (0,0)\}$ share no splitting field of degree $p^{n-1}$.
\end{thm}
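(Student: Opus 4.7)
My approach is by contradiction: suppose $L/F$ is a common splitting field of degree $p^{n-1}$ for every algebra on the list, and derive an inconsistency from valuation-theoretic data.

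First, since each algebra $A$ in the list is a division algebra of degree $p^{n-1}$ (by \Lref{Shift} and \Rref{More}) and $[L:F]=p^{n-1}=\operatorname{ind}(A)$, $L$ must embed into $A$ as a maximal subfield. Because $F$ is Henselian, $\mathfrak v$ extends uniquely to $L$ and to each $A$, and the restriction of $A$'s valuation to the subfield $L$ must coincide with $L$'s own extension of $\mathfrak v$. This gives $\Gamma_L\subseteq \Gamma_A$ and $\overline{L}\subseteq\overline{A}$ for every $A$ in the list. A computation parallel to the preceding Proposition shows that every listed algebra is totally ramified over $F$, i.e.\ $\overline{A}=\overline{F}$ and $|\Gamma_A/\Gamma_F|=[A:F]$ (the fundamental inequality is an equality, no defect). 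Hence $\overline{L}=\overline{F}$ and by the fundamental inequality $|\Gamma_L/\Gamma_F|\le p^{n-1}$.

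Next, since $A_1=B_{0,\dots,0,1}$ lies in the list alongside $A_2,\dots,A_{n-1}$, the preceding Proposition yields $\Gamma_L\subseteq\bigcap_{i=1}^{n-1}\Gamma_{A_i}=\frac{1}{p}\mathbb{Z}^n$. Therefore $V:=\Gamma_L/\Gamma_F$ is an $\mathbb{F}_p$-subspace of $\mathbb{F}_p^n$ of dimension at most $n-1$.

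The heart of the proof is to use the $B_{d_1,\dots,d_n}$'s to rule out every such $V$. For each $B=B_{d_1,\dots,d_n}$ on the list, the embedding $L\hookrightarrow B$ as a totally ramified maximal subfield should translate, through the explicit Artin--Schreier/Kummer structure of $B$, into a linear constraint on $V$ indexed by the tuple $(d_1,\dots,d_n)$. Concretely, I would track which $p$-th roots and which Artin--Schreier elements of $B$ necessarily lie in $L$, using the pairings $t_j=x_j-y_{j-1}^{-1}$ introduced in the Proposition and the distinguished element $x_0$ with $x_0^p-x_0=\prod\alpha_k^{-d_k}$, and read off the resulting constraint on $V$ in $\mathbb{F}_p^n$. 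The combinatorial content of the list, namely that $(d_1,\dots,d_n)$ exhausts $\mathbb{F}_p^n$ minus the plane $d_{n-1}=d_n=0$, is chosen precisely so that these constraints together rule out every $(n-1)$-dimensional (or smaller) subspace of $\mathbb{F}_p^n$, producing the contradiction.

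The main obstacle is making Step~4 precise: pinning down the exact linear condition that each $B$ imposes on $V$. In characteristic $p$ the symplectic picture that works for tame totally ramified division algebras becomes subtle because of the Artin--Schreier generators, and one needs to combine valuation information with explicit manipulation of the generators $x_j,y_j$ in $B$. Once this dictionary between $(d_1,\dots,d_n)$ and hyperplanes in $\mathbb{F}_p^n$ is in hand, ruling out the exceptional case $(n,p)=(2,2)$ (where the family is too small) explains the hypothesis, and the counting argument delivering the contradiction is then straightforward.
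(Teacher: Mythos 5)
Your setup reproduces the first half of the paper's argument correctly: the common splitting field $L$ embeds as a maximal subfield into each (degree $p^{n-1}$, division) algebra on the list, and the intersection $\bigcap_i \Gamma_{A_i}=\tfrac1p\ZZ^n$ forces $\Gamma_L\subseteq\tfrac1p\ZZ^n$. But the decisive step --- what the algebras $B_{d_1,\dots,d_n}$ actually forbid --- is exactly the part you leave unproved, and the mechanism you gesture at cannot work as stated. You propose that each $B_{d_1,\dots,d_n}$ imposes a \emph{linear constraint on the subgroup} $V=\Gamma_L/\Gamma_F\subseteq\FF_p^n$. However, for $d_n\neq 0$ one computes that $\Gamma_{B_{d_1,\dots,d_n}}$ already contains all of $\tfrac1p\ZZ^n$ (it contains $\tfrac1{p^2}\ZZ$ in the first $n-2$ coordinates, $\tfrac1p\ZZ$ in the $(n-1)$st, and $\tfrac1p(d_1,\dots,d_n)$ with $d_n$ invertible mod $p$), so the containment $\Gamma_L\subseteq\Gamma_B$ gives no information beyond what the $A_i$ already gave. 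No constraint on the group $\Gamma_L/\Gamma_F$ is forthcoming from value groups alone, and you acknowledge you do not know how to extract the "dictionary between $(d_1,\dots,d_n)$ and hyperplanes." That is the gap: the heart of the proof is missing.

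The paper's actual device is the reduced trace form, counting value classes of a large $F$-subspace of $L$ rather than elements of $\Gamma_L/\Gamma_F$. The trace-zero hyperplane $V\subseteq L$ has $F$-dimension at least $p^{n-1}-1$; since $L$ is totally ramified, $\mathfrak v(V\setminus\{0\})$ hits at least $p^{n-1}-1$ distinct classes of $\tfrac1p\ZZ^n/\Gamma_F$. Under the embedding $L\hookrightarrow B_{d_1,\dots,d_n}$, trace-zero elements of $L$ are trace-zero in $B_{d_1,\dots,d_n}$, and (by the monomial description of the trace in the Preliminaries, together with the fact that the standard monomials of $B_{d_1,\dots,d_n}$ represent distinct value classes) the values of nonzero trace-zero elements of $B_{d_1,\dots,d_n}$ omit precisely the class $(\tfrac{d_1}{p},\dots,\tfrac{d_n}{p})$. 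Running over all $p^n-p^{n-2}$ admissible tuples leaves only $p^{n-2}$ classes available to $\mathfrak v(V)$, and $p^{n-2}<p^{n-1}-1$ unless $(n,p)=(2,2)$ --- which is where the excluded case genuinely enters, not (as you suggest) because "the family is too small" to cut out all subspaces. To repair your proof you would need to replace the hyperplane-constraint plan of Step 4 with this trace-form counting argument, or supply some genuinely different finer invariant of the embeddings $L\hookrightarrow B_{d_1,\dots,d_n}$.
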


\begin{proof}
Suppose $K$ is a common degree $p^{n-1}$ splitting field.
Since all these division algebras are of degree $p^{n-1}$, $K$ embeds into all of them.
Since $K$ embeds into each $A_i$, we have $\Gamma_K \subseteq \bigcap_{i=1}^{n-1} \Gamma_{A_i}=\frac{1}{p} \mathbb{Z} \times \dots \times \frac{1}{p} \mathbb{Z}$, considering the right-to-left $(\alpha_1,\dots,\alpha_n)$-adic valuation $\mathfrak{v}$ on $F$ extended to each $A_i$.
The subspace $V$ of $K$ of elements of trace 0 is of dimension at least $p^{n-1}-1$, and since $K$ is totally ramified over $F$, the set of classes of $\mathfrak{v}(V)$ modulo $\Gamma_F$ is of cardinality at least $p^{n-1}-1$.
As we noted, $\mathfrak{v}(V)$ modulo $\Gamma_F$ is a subset of $\frac{1}{p} \mathbb{Z} \times \dots \times \frac{1}{p} \mathbb{Z}/\Gamma_F$.
Now, $\mathfrak{v}(V)$ must be a subset of the set of values of trace zero elements in $B_{d_1,\dots,d_n}$, which means that $\mathfrak{v}(V)$ mod $\Gamma_F$ is a subset of 
$$(\frac{1}{p} \mathbb{Z} \times \dots \times \frac{1}{p} \mathbb{Z}/\Gamma_F)\setminus \{(\frac{d_1}{p},\dots,\frac{d_n}{p}):(d_1,\dots,d_n) \in \{0,\dots,p-1\}^{\times n}, (d_{n-1},d_n)\neq (0,0)\}.$$
However, the latter is of cardinality  $p^n-(p-1)(p^{n-1}+p^{n-2})=p^{n-2}$, and $p^{n-2}<p^{n-1}-1$, contradiction.
\end{proof}
This theorem shows that $\abd_p(F)>n-1$ for these fields in all cases but $n=p=2$, a case that was covered in \cite{Chapman:2021}.
Since for these fields, $\abd_p(F)\leq n$, the main theorem (Theorem \ref{GB}) follows.
\begin{rem}
Theorem 3.4 actually proves that $\operatorname{Brd}_{p,n-2+p^n-p^{n-2}}(F)=n$ when $F_0$ is perfect of $\operatorname{char}(F_0)=p>0$, $F$ is either $F_0(\alpha_1,\dots,\alpha_n)$ or $F_0(\!(\alpha_1)\!)\dots(\!(\alpha_n)\!)$ and $n\geq 2$ with $(n,p)\neq (2,2)$.
For $n=p=2$, $n-2+p^n-p^{n-2}=3$, and it is not always true that $\operatorname{Brd}_{2,3}(F)=2$. For $n=p=2$, when $F_0=\mathbb{F}_2^{sep}$, we have $\operatorname{Brd}_{2,3}(F)=1$ (see Example \ref{ECE}).
However, for any perfect $F_0$, there exist four quaternion algebras over $F$ with no common quadratic splitting field, and therefore $\operatorname{Brd}_{2,4}(F)=2$.
\end{rem}

\section{Iterated Laurent Series over  Fields of Characteristic not $p$}
Let $p$ be a prime integer.
Recall that over fields $F$ of $\operatorname{char}(F)\neq p$ containing primitive $p$th roots of unity $\rho$, ${_p\operatorname{Br}}(F)$ is generated by symbol algebras of degree $p$ (\cite{MS}), which take the form $(\alpha,\beta)_{p,F}=F\langle i,j : i^p=\alpha, j^p=\beta, j i=\rho i j \rangle$. Let $F_0$ be an algebraically closed field of characteristic not $p$. Let $n$ be a natural number $\geq 2$, and $F=F_0(\!(\alpha_1)\!)\dots(\!(\alpha_n)\!)$ the field of iterated Laurent series in $n$ variables over $F_0$.

The Brauer $p$-dimension of $F$ in this case is known to be $\left\lfloor \frac{n}{2} \right\rfloor$ (see \cite{Chipchakov:2019}).
When $n=2$, the group ${_{p}}Br(F)$ is the cyclic group $\langle (\alpha_1,\alpha_2)_{p,F} \rangle$ of order $p$, and thus $\operatorname{ABrd}_p(F)=\operatorname{Brd}_p(F)$.
For $n \geq 3$, the asymptotic Brauer $p$-dimension may be greater.
Since the set $S$ of classes of index $p^{\left\lfloor \frac{n}{2} \right\rfloor}$ in ${_p\operatorname{Br}}(F)$ is finite, it is enough to consider the common splitting fields of the classes in $S$.

\begin{thm}
Suppose $n\geq 3$. Then $\operatorname{ABrd}_p(F)=n-1$.
\end{thm}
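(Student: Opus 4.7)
The strategy is to match the bounds $\abd_p(F) \leq n-1$ and $\abd_p(F) \geq n-1$ using the Henselian valuation theory of $F$ together with the Merkurjev--Suslin theorem. The common framework is that $F$ carries the $(\alpha_1,\dots,\alpha_n)$-adic Henselian valuation $\mathfrak{v}$, which extends uniquely to any finite extension $E/F$; since $F_0$ is algebraically closed, the residue field of $E$ is again $F_0$ and $\Gamma_E$ is a free abelian group of rank $n$. Because $F_0^\times$ is divisible and the residue characteristic is prime to $p$, one has isomorphisms $E^\times/E^{\times p^\ell}\cong \Gamma_E/p^\ell\Gamma_E\cong (\mathbb{Z}/p^\ell)^n$ for every $\ell\geq 1$, and, via Merkurjev--Suslin, a symbol isomorphism ${_{p^\ell}}\operatorname{Br}(E)\cong\bigwedge^2(E^\times/E^{\times p^\ell})$.

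For the upper bound I would fix a finite extension $E/F$ and an exponent $p^m$, choose a $\mathbb{Z}$-basis $\gamma_1,\dots,\gamma_n$ of $\Gamma_E$ and elements $\beta_i\in E^\times$ with $\mathfrak{v}(\beta_i)=\gamma_i$, and take $L=E(\sqrt[p^m]{\beta_1},\dots,\sqrt[p^m]{\beta_{n-1}})$, which has degree $p^{m(n-1)}$ over $E$. Since every class in ${_{p^m}}\operatorname{Br}(E)$ is a sum of symbols $(\beta_i,\beta_j)_{p^m}$, and each $\beta_i$ with $i\leq n-1$ becomes a $p^m$-th power in $L$ (the diagonal symbol $(\beta_n,\beta_n)_{p^m}$ already vanishing because $-1\in F_0^{\times p^m}$), the field $L$ splits ${_{p^m}}\operatorname{Br}(E)$, yielding $\abd_p(F)\leq n-1$.

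For the lower bound I would exhibit a finite family of classes over $F$ itself with no common small splitting field, namely the $\binom{n}{2}$ symbol algebras $(\alpha_i,\alpha_j)_{p,F}$ for $1\leq i<j\leq n$. Assume toward contradiction that $K/F$ is a common splitting field of degree dividing $p^{n-2}$, and set $V=F^\times/F^{\times p}$ and $W=K^\times/K^{\times p}$, both isomorphic to $\mathbb{F}_p^n$. Under the symbol identification ${_p}\operatorname{Br}(K)\cong\bigwedge^2 W$, the splitting hypothesis forces the induced map $\bigwedge^2 V\to\bigwedge^2 W$ to kill every basis element $\bar\alpha_i\wedge\bar\alpha_j$, hence to be the zero map; consequently the restriction $V\to W$ has image of $\mathbb{F}_p$-dimension at most $1$, so $\dim_{\mathbb{F}_p}\ker(V\to W)\geq n-1$. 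But each nonzero $\bar a\in\ker(V\to W)$ is represented by an $a\in F^\times$ that is a $p$-th power in $K$, and therefore produces a Kummer subextension $F(\sqrt[p]{a})\subseteq K$; by Kummer theory, $n-1$ linearly independent kernel classes assemble into a subfield of $K$ of degree $p^{n-1}>p^{n-2}\geq [K:F]$, a contradiction.

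The step I expect to be the main obstacle is establishing the symbol-map identification ${_{p^\ell}}\operatorname{Br}(E)\cong\bigwedge^2(E^\times/E^{\times p^\ell})$ uniformly across every finite extension $E/F$ and every exponent $p^\ell$: one must verify that the units contribute nothing (via divisibility of $F_0^\times$ and bijectivity of the $p$-th power map on principal units when residue characteristic differs from $p$) and that the Steinberg relations in $K_2^M$ are already encoded by the exterior square in this tame setting. Once this is in place, both halves of the argument reduce to linear algebra over $\mathbb{F}_p$.
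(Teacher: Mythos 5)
Your proof is correct and follows essentially the same strategy as the paper's. The upper bound is identical in substance (adjoin $p^m$-th roots of $n-1$ elements whose values generate $\Gamma_E$); the paper only needs the exponent-$p$ case because its Theorem~\ref{EC} reduces arbitrary $p$-power exponent to exponent $p$, whereas you handle all exponents directly. For the lower bound the paper works directly with value groups: if $[K:F]\leq p^{n-2}$, the fundamental inequality forces the image of $\Gamma_F$ in $\Gamma_K/p\Gamma_K$ to have $\mathbb{F}_p$-dimension at least $2$, so some symbol $(\gamma,\delta)_{p,K}$ with $\mathfrak{v}(\gamma),\mathfrak{v}(\delta)$ independent remains division. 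Your version is the mirror image of this under the isomorphism $K^\times/K^{\times p}\cong\Gamma_K/p\Gamma_K$: you show the restriction $V\to W$ has rank at most $1$ and convert the large kernel into a degree-$p^{n-1}$ Kummer subextension of $K$, where the paper converts the large image of $\Gamma_F$ into a large index $|\Gamma_K/\Gamma_F|$; the two contradictions are equivalent. One remark on the step you flag as the main obstacle: you do not need the full identification ${_p}\operatorname{Br}(K)\cong\bigwedge^2(K^\times/K^{\times p})$. For the lower bound all that is used is that a symbol whose two slots have $\mathbb{F}_p$-independent values in $\Gamma_K/p\Gamma_K$ is a (totally ramified) division algebra --- exactly the fact the paper invokes --- and for the upper bound only surjectivity of the symbol map onto ${_{p^m}}\operatorname{Br}(E)$, which follows from Merkurjev--Suslin together with the decomposition $E^\times/E^{\times p^m}\cong\Gamma_E/p^m\Gamma_E$ that you describe. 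So the obstacle is standard for these Henselian fields and your argument goes through.
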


\begin{proof}
Let $K/F$ be a field extension of degree dividing $p^{n-2}$.
The natural valuation on $F$ extends to $K$ for $F$ is henselian.
We claim that there exist $\gamma$ and $\delta$ in $F^\times$ whose values are $\mathbb{F}_p$-independent in $\Gamma_K/p\Gamma_K$. Because otherwise, the dimension of $(\Gamma_K \cap \frac{1}{p}\Gamma_F)/p(\Gamma_K \cap \frac{1}{p}\Gamma_F)$ over $\mathbb{F}_p$ would be at least $n-1$, and then $|\Gamma_K/\Gamma_F|$ would be at least $p^{n-1}$, contradictory to the fact that $|\Gamma_K/\Gamma_F|\leq p^{n-2}$.
Therefore, $(\gamma,\delta)_{p,K}$ is a division algebra.
In particular, $K$ does not split $(\gamma,\delta)_{p,F}$, which means there is no common splitting field of ${_p}\operatorname{Br}(F)$ of degree dividing $p^{n-2}$. Hence, $\operatorname{ABrd}_p(F)\geq n-1$.

Now, for any finite field extension $E$ of $F$, take $\beta_1,\dots,\beta_n$ to be elements whose values generate $\Gamma_E$, and then the field extension $E[\sqrt[p]{\beta_1},\dots,\sqrt[p]{\beta_{n-1}}]$ splits the entire group ${_{p}Br}(E)$.
Consequently, $\operatorname{ABrd}_p(F)=n-1$.
\end{proof}

Since for $n\geq 3$, we have $n-1>\left\lfloor \frac{n}{2} \right\rfloor$, this provides a family of fields where $\operatorname{ABrd}_p(F)>\operatorname{Brd}_p(F)$ with a gap as large as we want.

\section{Further open questions}

Clearly fields of infinite Brauer $p$-dimension exits, such as the field of functions with $\aleph_0$ variables $F_0(\alpha_1,\dots)$. For this field, the asymptotic Brauer $p$-dimension is also infinite.

\begin{ques}
Are there fields $F$ with $\operatorname{ABrd}_p(F)=\infty>\operatorname{Brd}_p(F)$?
\end{ques}

A natural candidate would be the field $F=F_0(\alpha,\beta)$ with $F_0$ algebraically closed of characteristic 0.

\begin{ques}
What is $\operatorname{ABrd}_p(\mathbb{C}(\alpha,\beta))$?
\end{ques}

\section{Cyclic algebras without common maximal subfields}\label{charp}

\begin{prop}[{cf. \cite[Theorem 1.1 (i)]{JacobWadsworth:1993}}] \label{JWP}
Let $F_0$ be a field of odd characteristic $p$ and $F=F_0(\!(t)\!)$ the field of Laurent series over $F$. 
\begin{enumerate}
\item Write $A=[a,t)_{p,F}$ and $B=[c,dt)_{p,F}$.
Then $A\otimes B$ is division if and only if $[c,d)_{p,F_0[\wp^{-1}(a+c)]}$ is division.
\item Write $A=[t^{-1},a)_{p,F}$ and $B=[d+t^{-1},c)_{p,F}$.
Then $A\otimes B$ is division if and only if $[d,c)_{p,F_0[\sqrt[p]{ac}]}$ is division.
\end{enumerate}
\end{prop}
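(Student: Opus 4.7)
The plan is to apply Morandi's theorem from \cite{Morandi:1989} (cited in Section~2) after rewriting $[A]+[B]$ in $\operatorname{Br}(F)$ via bilinearity of the Artin--Schreier symbol. For part~(1),
\[
[a,t)+[c,dt) = [a,t)+[c,d)+[c,t) = [a+c,t)+[c,d),
\]
so I set $D=[a+c,t)_{p,F}$ and $E=[c,d)_{p,F}$; for part~(2),
\[
[t^{-1},a)+[d+t^{-1},c) = [t^{-1},a)+[d,c)+[t^{-1},c) = [t^{-1},ac)+[d,c),
\]
so $D=[t^{-1},ac)_{p,F}$ and $E=[d,c)_{p,F}$. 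In each case $E$ is unramified with $\Gamma_E=\Gamma_F$ and residue $\bar E$ equal to $[c,d)_{p,F_0}$ or $[d,c)_{p,F_0}$ accordingly.

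Next I would compute the invariants of $D$ under the $t$-adic valuation. In part~(1), assuming $a+c\notin\wp(F)$, $D$ is division of degree $p$ with $\Gamma_D=\tfrac{1}{p}\mathbb{Z}$ (coming from $y^p=t$) and residue $\bar D=F_0[\wp^{-1}(a+c)]$ (a cyclic Artin--Schreier extension). In part~(2), assuming $ac\notin F^p$, $D$ is division of degree $p$ with $\Gamma_D=\tfrac{1}{p}\mathbb{Z}$ (coming from the Artin--Schreier generator $x$ with $x^p-x=t^{-1}$, whence $v(x)=-1/p$) and residue $\bar D=F_0[\sqrt[p]{ac}]$ (a purely inseparable degree-$p$ extension). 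The fundamental inequality forces $D$ to be defectless in each case, and $\Gamma_D\cap\Gamma_E=\Gamma_F$ holds since $E$ is unramified. The residue tensor product evaluates to $[c,d)_{p,F_0[\wp^{-1}(a+c)]}$ in part~(1) and $[d,c)_{p,F_0[\sqrt[p]{ac}]}$ in part~(2), so Morandi's theorem yields the forward direction of each iff: if this residue tensor is division, so is $D\otimes E$, and hence $A\otimes B$ (Brauer-equivalent and of the same dimension $p^4$).

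For the converse, the residue tensor is a degree-$p$ CSA over $\bar D$, hence either division or split. In the split case $\bar D$ splits $\bar E$, and since $E$ is unramified, the lift $F[\wp^{-1}(a+c)]\subset D$ (resp.\ $F[\sqrt[p]{ac}]\subset D$) splits $E$ as well. This same subfield is a maximal subfield of $D$, hence splits $D$, and therefore splits $D\otimes E$; consequently $\operatorname{ind}(A\otimes B)\leq p<p^2$, so $A\otimes B$ is not division. The degenerate cases ($a+c\in\wp(F)$ in part~(1) or $ac\in F^p$ in part~(2)) are handled separately: in each, $D$ is split, $A\otimes B\sim E$ has index at most $p$, and the residue extension collapses to $F_0$, consistent with the iff.

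The main obstacle will be the converse direction, since Morandi's theorem gives only a sufficient condition. The crucial observation is that failure of $\bar D\otimes\bar E$ to be division forces $\bar D$ to embed as a splitting field of $\bar E$, and this lifts to a common degree-$p$ splitting subfield of $D$ and $E$ sitting inside $D$ itself. In part~(2) the additional technicality is that this subfield is purely inseparable, but the general fact that a maximal subfield of a CSA is a splitting field holds without a separability hypothesis, so the argument carries over uniformly.
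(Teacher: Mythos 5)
Your proposal is correct and follows essentially the same route as the paper: the identical rewriting $A\otimes B\cong D\otimes E$ with $D=[a+c,t)_{p,F}$, $E=[c,d)_{p,F}$ (resp.\ $D=[t^{-1},ac)_{p,F}$, $E=[d,c)_{p,F}$) and the same application of Morandi's theorem for the ``if'' direction. The only difference is in the easy converse, where the paper simply observes that $[c,d)_{p,F_0[\wp^{-1}(a+c)]}$ (resp.\ $[d,c)_{p,F_0[\sqrt[p]{ac}]}$) sits as a subring of $D\otimes E$, so its zero divisors are inherited directly --- a slightly shorter argument than your common-splitting-field/index computation, though both are valid.
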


\begin{proof}
Write $A=[a,t)_{p,F}$ and $B=[c,dt)_{p,F}$.
The tensor product $A \otimes B$ is isomorphic to $D \otimes E$ where $D=[a+c,t)_{p,F}$ and $E=[c,d)_{p,F}$.
Consider the $t$-adic valuation on $F$.
Since $\overline{D} \otimes \overline{E}=[c,d)_{p,F_0[\wp^{-1}(a+c)]}$, if it is division, then by \cite{Morandi:1989}, $D\otimes E$ is division.
In the opposite direction, if $[c,d)_{p,F_0[\wp^{-1}(a+c)]}$ is not division, then since it is isomorphic to a subalgebra of $D\otimes E$, the latter is not division either.

Now, write $A=[t^{-1},a)_{p,F}$ and $B=[d+t^{-1},c)_{p,F}$.
The tensor product $A \otimes B$ is isomorphic to $D \otimes E$ where $D=[t^{-1},ac)_{p,F}$ and $E=[d,c)_{p,F}$.
Consider the $t$-adic valuation on $F$.
Since $\overline{D} \otimes \overline{E}=[d,c)_{p,F_0[\sqrt[p]{ac}]}$, if it is division, then by \cite{Morandi:1989}, $D\otimes E$ is division.
In the opposite direction, if $[d,c)_{p,F_0[\sqrt[p]{ac}]}$ is not division, then since it is isomorphic to a subalgebra of $D\otimes E$, the latter is not division either.
\end{proof}

\begin{lem}\label{Tignol}
Given a prime integer $p$ and a field $k$ of $\operatorname{char}(k)=p$, let $F_0=k(\!(d)\!)(\!(c)\!)$.
\begin{enumerate}
\item The field $K=F_0[x : x^p-x=d^{-1}]$ is not a subfield of the symbol division algebra $D=[c^{-1},d^{-1})_{p,F_0}$.
\item The field $K=F_0[y : y^p=d]$ is not a subfield of the symbol division algebra $D=[d^{-1},c)_{p,F_0}$.
\end{enumerate}
\end{lem}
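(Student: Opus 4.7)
The plan is to argue valuation-theoretically, in the style of \Lref{Shift} and \Pref{JWP}. I equip $F_0=k(\!(d)\!)(\!(c)\!)$ with its $c$-adic (Henselian) valuation $\mathfrak{v}$, so that $\overline{F_0}=k(\!(d)\!)$, $\Gamma_{F_0}=\mathbb{Z}$, $\mathfrak{v}(d)=0$ and $\mathfrak{v}(c)=1$. In each part $D$ is a division algebra over the Henselian $F_0$, hence $\mathfrak{v}$ extends uniquely to $D$; by the same token it extends uniquely to the degree-$p$ field $K$. The strategy is to compute $\overline{D}$ exactly and then to observe that $\overline{K}$ cannot embed in $\overline{D}$ because one of the two is a separable extension of $\overline{F_0}$ and the other is purely inseparable.

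For (1), write the standard generators of $D=[c^{-1},d^{-1})_{p,F_0}$ as $x_D,y_D$ with $x_D^p-x_D=c^{-1}$ and $y_D^p=d^{-1}$. A case analysis on the sign of $\mathfrak{v}(x_D)$ forces $\mathfrak{v}(x_D)=-1/p$ (since the right-hand side has value $-1$), while $\mathfrak{v}(y_D)=0$ with residue $\overline{y_D}^{\,p}=d^{-1}$. Thus $\Gamma_D\supseteq \tfrac{1}{p}\mathbb{Z}$ and $\overline{D}\supseteq\overline{F_0}[\sqrt[p]{d^{-1}}]$, the latter a purely inseparable field extension of degree $p$ (since $d^{-1}\not\in\overline{F_0}^{\,p}$). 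The fundamental inequality
\[
p^{2}=[D:F_0]\;\geq\;|\Gamma_D/\Gamma_{F_0}|\cdot[\overline{D}:\overline{F_0}]\;\geq\; p\cdot p
\]
is then forced to be an equality, and pins down $\overline{D}=\overline{F_0}[\sqrt[p]{d^{-1}}]$. On the other hand, $K=F_0[x:x^p-x=d^{-1}]$ is an \emph{unramified} degree-$p$ extension (because $d^{-1}$ is a unit), with $\overline{K}=\overline{F_0}[\overline{x}:\overline{x}^{\,p}-\overline{x}=d^{-1}]$, a separable Artin--Schreier extension of degree $p$. An embedding $K\hookrightarrow D$ would make $\mathfrak{v}|_{K}$ the unique extension of $\mathfrak{v}$ to $K$ and would therefore induce an $\overline{F_0}$-embedding $\overline{K}\hookrightarrow\overline{D}$; this is impossible because a purely inseparable extension of degree $p$ admits no proper separable subextension.

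Part (2) runs in parallel, with the Artin--Schreier and $p$-th-root data exchanged. In $D=[d^{-1},c)_{p,F_0}$ the relation $x_D^{\,p}-x_D=d^{-1}$ forces $\mathfrak{v}(x_D)=0$ with \emph{separable} residue $\overline{x_D}^{\,p}-\overline{x_D}=d^{-1}$, while $y_D^{\,p}=c$ forces $\mathfrak{v}(y_D)=1/p$; the same fundamental-inequality argument identifies $\overline{D}=\overline{F_0}[\overline{x_D}]$ as a separable degree-$p$ extension. The candidate $K=F_0[y:y^{p}=d]$ is again unramified, with $\overline{K}=\overline{F_0}[\sqrt[p]{d}]$ purely inseparable, and the same separable-versus-inseparable obstruction forbids the embedding.

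The main point requiring care is the fundamental-inequality step: in each case I have only exhibited one inclusion for $\Gamma_D$ and for $\overline{D}$, and I need the equality $[D:F_0]=p^{2}$ together with the product lower bound $p\cdot p$ to force both inclusions to be equalities, so that $\overline{D}$ really is the one-generator extension I wrote down (rather than something larger that could accommodate $\overline{K}$). This step relies on $\overline{D}$ being a division algebra over $\overline{F_0}$, which follows from $D$ being division and $F_0$ being Henselian, as recorded in the preliminaries from \cite{TignolWadsworth:2015}.
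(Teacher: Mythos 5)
Your proof is correct, but it takes a genuinely different route from the paper. For part (1) the paper invokes Tignol's trace-form invariant $w$ (Theorems 1.8 and 2.6 of his 1992 paper) with respect to the rank-two $(d,c)$-adic valuation, comparing $w(\operatorname{Trd}_{D/F_0})=(0,\tfrac{p-1}{p})$ against $w(\operatorname{Tr}_{K/F_0})=(\tfrac{p-1}{p},0)$; for part (2) it shows directly that $D\otimes K$ is division over $K=k(\!(y)\!)(\!(c)\!)$ by $\mathbb{F}_p$-independence of values. You instead work with the rank-one $c$-adic valuation, pin down the residue algebra $\overline{D}$ exactly via the fundamental inequality, and rule out the embedding $\overline{K}\hookrightarrow\overline{D}$ by the separable-versus-purely-inseparable dichotomy of degree-$p$ residue extensions of $k(\!(d)\!)$. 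Your computations check out: $\mathfrak{v}(x_D)=-1/p$ and $\overline{y_D}^{\,p}=d^{-1}$ in part (1), the symmetric statements in part (2), the irreducibility facts $d^{\pm 1}\notin k(\!(d)\!)^p$ and $d^{-1}\notin\wp(k(\!(d)\!))$, and the standard fact that a subfield $K\subseteq D$ inherits the unique extension of the Henselian valuation and hence gives $\overline{K}\hookrightarrow\overline{D}$. Your argument is arguably more self-contained (it uses only the fundamental inequality already recalled in the preliminaries, rather than Tignol's $w$-calculus), and it has the small merit of treating both parts uniformly. Both your proof and the paper's take as given that $D$ is division (it is built into the statement), which is what licenses extending the valuation to $D$ and applying the fundamental inequality; be aware that this hypothesis is doing real work, since a split $D$ would contain every degree-$p$ extension of $F_0$.
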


\begin{proof}
Write $K=F_0[x : x^p-x=d^{-1}]$ and $D=[c^{-1},d^{-1})_{p,F_0}$.
Consider the right-to-left $(d,c)$-adic valuation $\mathfrak{v}$.
Recall that for any division algebra $A$ over $F$, $w(A)$ denotes the minimum of $\mathfrak{v}(\tr(z))-\mathfrak{v}(z)$ where $z$ ranges over all elements of $A$. By \cite[Theorem 2.6]{Tignol:1992} $w(\operatorname{Trd}_{D/F}) = (0,\frac{p-1}{p})$ for the division algebra $D$, and by \cite[Theorem 1.8]{Tignol:1992} $w(\operatorname{Tr}_{K/F_0}) = (\frac{p-1}{p},0)$ for the field $K$. But by definition, $w(\operatorname{Trd}_{D/F}) \leq w(\operatorname{Tr}_{K/F_0})$ if $K$ is a subfield of $D$, whereas $(0,\frac{p-1}{p}) > (\frac{p-1}{p},0)$. 

Now, write $K=F_0[y : y^p=d]$ and $D=[d^{-1},c)_{p,F_0}$.
Since $K=k(\!(y)\!)(\!(c)\!)$ and $D=[d^{-1},c)_{p,F_0}=[y^{-p},c)_{p,F_0}$, we have $D \otimes K=[y^{-1},c)_{p,k(\!(y)\!)(\!(c)\!)}$, which is a division algebra because the values of $y^{-1}$ and $c$ are $\mathbb{F}_p$-independent in $\Gamma_K/p\Gamma_K$.
\end{proof}

\begin{exmpl}
Suppose $p$ is an odd prime. Let $k$ be a field of $\operatorname{char}(k)=p$, $F_0=k(\!(d)\!)(\!(c)\!)$, $F=F_0(\!(t)\!)$, $A=[d^{-1}-c^{-1},t)_{p,F}$, $B=[2d^{-1}-2c^{-1},t)_{p,F}$ and $C=[c^{-1},d^{-1}t)_{p,F}$.
Since $[c^{-1},d^{-1})_{p,K}$ is division by Lemma \ref{Tignol} where $K=F[\wp^{-1}(d^{-1})]=F[\wp^{-1}(d^{-1}-c^{-1}+c^{-1})]$, the algebra $A\otimes C$ is division by Proposition \ref{JWP}, and thus $A$ and $C$ do not share any maximal subfield.
The algebras $B$ and $C$ do not share any maximal subfield either, because $B$ is Brauer equivalent to $A \otimes A$.
However, $[c^{-1},d^{-1})_{p,L}$ is split when $L=F[\wp^{-1}(2d^{-1}-c^{-1})]=F[\wp^{-1}(2d^{-1}-2c^{-1}+c^{-1})]$ because $[c^{-1},d^{-1})_{p,L}=[c^{-1}-2d^{-1},d^{-1})_{p,L}=[2d^{-1}-c^{-1},d)_{p,L}$.
Therefore, $B$ and $C$ form a pair of cyclic degree $p$ algebras that share no maximal subfield despite their tensor product being non-division by Proposition \ref{JWP}.

Now, write $A=[t^{-1},dc^{-1})_{p,F}$, $B=[t^{-1},d^2c^{-2})_{p,F}$ and $C=[d^{-1}+t^{-1},c)_{p,F}$.
Since $[d^{-1},c)_{p,K}$ is division where $K=F[\sqrt[p]{d}]=F[\sqrt[p]{dc^{-1}\cdot c}]$ by Lemma \ref{Tignol}, the algebra $A\otimes C$ is division by Proposition \ref{JWP}, and thus $A$ and $C$ do not share any maximal subfield.
The algebras $B$ and $C$ do not share any maximal subfield either, because $B$ is Brauer equivalent to $A \otimes A$.
However, $[d^{-1},c)_{p,L}$ is split when $L=F[\sqrt[p]{d^2c^{-1}}]=F[\sqrt[p]{d^2c^{-2}\cdot c}])]$ because $[d^{-1},c)_{p,L}=[d^{-1},cd^{-2})_{p,L}=[-d^{-1},d^2c^{-1})_{p,L}$.
Therefore, $B$ and $C$ form a pair of cyclic degree $p$ algebras that share no maximal subfield despite their tensor product being non-division by Proposition \ref{JWP}.
\end{exmpl}

When $F_0$ is algebraically closed, the field $F$ from the previous example has Brauer $p$-dimension 2, like in the analogous examples from \cite{TignolWadsworth:1987}.

%

\bibliographystyle{abbrv}
\bibliography{bibfile}

\end{document}